\newcommand{\be}{\begin{eqnarray}}
\newcommand{\ee}{\end{eqnarray}}
\newtheorem{theo}{Theorem}
\newtheorem{coro}{Corollary}
\newtheorem{defi}{Definition}
\newcommand{\CP}{{^C\!P}}
\newcommand{\DP}{{^D\!P}}
\newcommand{\SP}{{^S\!P}}
\newcommand{\BP}{{^B\!P}}
\newcommand{\QP}{{^Q\!P}}
\newcommand{\HP}{{^H\!P}}
\newcommand{\R}{\mathbb R}
\newcommand{\C}{\mathbb C}
\newcommand{\bH}{\mathbb H}
\newcommand{\mT}{{\mathcal T}}
\newcommand{\eps}{\epsilon}
\begin{document}
\date{June 30, 2021}

\title{A Galilei Invariant Version of Yong's Model} 

\author{Heinrich Freist\"uhler}
\maketitle
\vspace{-.5cm}
In his pioneering paper \cite{Y}, Yong has described the dynamics of a compressible fluid 
with Maxwell delayed viscosity as a symmetric hyperbolic system of balance laws, and shown 
that the solutions of this system tend to solutions of the Navier-Stokes equations.  

The original main purpose of the present note was to propose a Galilei invariant version of 
Yong's model, as a system of balance laws (while also communicating some simple observations on 
its shock waves). This has changed in the course of time.

\textit{The model.} 
For a barotropic fluid, $e=e(v), v=1/\rho,$ and $p=p(v)=-e'(v)$, with $-p'(v)=e''(v)>0$
consider the equations of motion
\be\label{YG}
\begin{aligned}
\rho_t+(\rho u^j)_{,j}&=0\\
(\rho u^i)_t +(\rho u^ju^i+p\delta^{ij})_{,j}+\frac1\epsilon(\sigma^{ij}+\tau\delta^{ij})_{,j}&=0\\
(\rho\sigma^{kl})_t+(\rho u^j\sigma^{kl})_{,j}
+\frac1\epsilon c^{klj}_{i}u^i_{,j}&=-\frac{\sigma^{kl}}{\mu\eps^2}\\
(\rho\tau)_t+(\rho u^j\tau)_{,j}
+\frac1\epsilon u^j_{,j}&=-\frac{\tau}{\nu\eps^2}
\end{aligned}
\ee
with
$$
c^{klj}_i=\frac12(\delta^k_i\delta^{lj}+\delta^l_i\delta^{kj})-\frac13 \delta^r_i\delta^j_r\delta^{kl},
$$
and define the total energy and its flux as
\be\label{EEF}
E=\rho(e+\frac12u_iu^i+\frac12\sigma_{kl}\sigma^{kl}+\frac12\tau^2),\quad 
F^j=((E+p)\delta^{ij}+\frac1\epsilon(\sigma^{ij}+\tau\delta^{ij}))u_i.
\ee
\begin{theo}
(i) Using $(E,F^j)$ from \eqref{EEF} as an (``entropy'', ``entropy'' flux) pair, 
system \eqref{YG} can be written as a symmetric hyperbolic system of balance laws.
All its solutions dissipate energy,
\be\label{divEG<0}
E_t+F^j_{,j}\le 0.
\ee
(ii) For any fixed values of $\epsilon >0$ and $\rho_*>0$ and any data 
$({^0\rho}_0,{^0u}^i,{^0\sigma}^{kl},{^0\tau})$ such that
$({^0\rho}_0-\rho_*,{^0u}^i,{^0\sigma}^{kl},{^0\tau})\in H^s(\R^d), s>d/2+1$
and ${^0\rho}_0>0$ is 
uniformly bounded away from $0$,
there exists a $\bar t>0$ such that system \eqref{YG} has a solution in $C^1([0,\bar t],H^s(\R^d))$ that 
assumes these data.
\goodbreak
(iii) If for certain initial data, the Navier-Stokes equations 
\be\label{NS}
\begin{aligned}
\rho_t+(\rho u^j)_{,j}&=0\\
(\rho u^i)_t +(\rho u^ju^i+p\delta^{ij})_{,j}&=
(\mu c^{ijl}_ku^k_{,l}+\nu\delta^{ij}u^k_{,k})_{,j}
\end{aligned}
\ee
have a solution in $C^1([0,\bar t],H^s(\R^d))$ for some $\bar t>0$,
then there exists an $\bar\epsilon>0$ such that for 
every $\epsilon\in(0,\bar\epsilon)$ system \eqref{YG} has a unique  
solution in $C^1([0,\bar t],H^s(\R^d))$ for the same data
with $\sigma^{kl}=\epsilon\mu c_i^{klj} u^i_{,j}, \tau=\epsilon\nu u^j_{,j}$, 
and the $(\rho,u)$-parts of these solutions converge, 
as $\epsilon\searrow 0$, to the solution of \eqref{NS}. 
\end{theo}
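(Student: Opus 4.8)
The plan is to prove (iii) by the classical relaxation-limit strategy: build an approximate solution of \eqref{YG} out of the given Navier--Stokes solution and control its distance from the exact solution by the energy estimate that the symmetric hyperbolic structure of part~(i) makes available \emph{uniformly in $\epsilon$}. Fix the Navier--Stokes solution $(\bar\rho,\bar u)\in C^1([0,\bar t],H^s)$ and introduce the \emph{equilibrium ansatz}: $\rho^a=\bar\rho$, $u^a=\bar u$, and $\sigma^a,\tau^a$ equal to the (order-$\epsilon$) equilibrium values that make the $\tfrac1\epsilon$-flux terms and the $\tfrac1{\epsilon^2}$-relaxation terms in the last two equations of \eqref{YG} cancel on the ansatz --- i.e. exactly the $\sigma,\tau$-data prescribed in the statement, so that the data are \emph{well prepared} and the initial error vanishes (no initial layer). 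By design this ansatz solves the first two equations of \eqref{YG} exactly --- the momentum equation collapses to \eqref{NS}, which is what fixes the coefficients $\mu,\nu$ --- and the only residuals are $R_\sigma=(\bar\rho\sigma^a)_t+(\bar\rho\bar u^j\sigma^a)_{,j}$ and the analogous $R_\tau$, both $O(\epsilon)$ with Sobolev norms bounded by $\epsilon$ times a (higher) norm of $(\bar\rho,\bar u)$.

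Next I would write the system for the error $(\tilde\rho,\tilde u,\tilde\sigma,\tilde\tau)$ between the exact solution of \eqref{YG} and the ansatz, and run the symmetrized energy estimate for it, differentiated up to the required order. Three structural facts, all contained in or immediately adjacent to part~(i), make the estimate $\epsilon$-uniform: the symmetrizer (essentially the Hessian of $E$) is uniformly positive on a fixed neighbourhood of the ansatz, in which the exact solution stays --- with $\rho$ bounded away from $0$ --- as long as the error is small; the $\tfrac1\epsilon$-coupling between the momentum block and the $(\sigma,\tau)$-block is skew-symmetric in the symmetrized variables and drops out of the energy identity; and the relaxation provides a damping of order $\tfrac1{\epsilon^2}$ on the $(\sigma,\tau)$-components, which in particular absorbs the cross terms $\langle R_\sigma,\tilde\sigma\rangle+\langle R_\tau,\tilde\tau\rangle$ at a cost of only $O(\epsilon^2)\,\|(R_\sigma,R_\tau)\|^2=O(\epsilon^{4})$. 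The outcome is a Gronwall inequality $\tfrac{d}{dt}\mathcal N\le C\mathcal N+C\epsilon^{4}$ for the squared top-order error norm $\mathcal N$, with $C$ independent of $\epsilon$, whence $\mathcal N(t)=O(\epsilon^{4})$ on every interval on which the solution exists and the error remains small.

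A standard continuation argument built on part~(ii) then produces $\bar\epsilon>0$ such that for every $\epsilon\in(0,\bar\epsilon)$ the solution of \eqref{YG} exists on all of $[0,\bar t]$: it cannot leave the $O(\epsilon^{2})$-neighbourhood of the ansatz in $H^s$, so neither does its $H^s$-norm blow up nor does $\rho$ reach $0$ before $\bar t$; it is unique by the well-posedness of symmetric hyperbolic Cauchy problems; and $\|(\rho^\epsilon,u^\epsilon)-(\bar\rho,\bar u)\|_{C([0,\bar t],H^s)}=O(\epsilon^{2})\to0$ as $\epsilon\searrow0$, while also $\sigma^\epsilon,\tau^\epsilon=O(\epsilon)\to0$. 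This is exactly (iii).

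The main obstacle is the uniform-in-$\epsilon$ energy estimate: it is precisely the interplay of the $\tfrac1{\epsilon^2}$-dissipativity of the relaxation, the skew-symmetry of the $\tfrac1\epsilon$-coupling, and the uniform positivity of the entropy Hessian --- the structure verified in (i) --- that keeps the singular factors out of the Gronwall constant. The one technical point requiring care is regularity bookkeeping, since $R_\sigma,R_\tau$ carry two more derivatives of $\bar u$ than the ansatz does: one either assumes the Navier--Stokes datum slightly smoother than $H^s$ (or exploits the instantaneous smoothing of \eqref{NS} for $t>0$ and treats $t\approx0$ via the well-prepared data), or proves convergence in $H^{s'}$ with $s'<s$ and upgrades it by interpolation against a uniform $H^s$ bound. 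Alternatively one can bypass the hand computation and invoke an abstract relaxation-limit theorem (e.g. Yong's structural-stability framework for hyperbolic relaxation systems), whose hypotheses are exactly what part~(i) establishes.
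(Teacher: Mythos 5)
Your argument for part (iii) is essentially the paper's own: the paper disposes of (ii) and (iii) in a single sentence by invoking Yong's proof of Theorem 3.1 in \cite{Y}, and the scheme you describe --- equilibrium ansatz $\sigma^a,\tau^a$ of order $\epsilon$ built from the Navier--Stokes solution so that the data are well prepared, $O(\epsilon)$ residuals confined to the relaxation equations, a symmetrized energy estimate in which the $\tfrac1\epsilon$ coupling combines into a divergence and the $\tfrac1{\epsilon^2}$ damping absorbs the residuals at cost $O(\epsilon^4)$, then Gronwall plus continuation --- is precisely that argument, including the extra-derivative bookkeeping for $R_\sigma,R_\tau$ that you correctly flag as the delicate point.

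The gap is that you prove only one of the three assertions. Parts (i) and (ii) are invoked as inputs (``the symmetric hyperbolic structure of part (i)'', ``a continuation argument built on part (ii)'') but never established, and (i) is where the model-specific content of the theorem actually sits. What is needed is, first, the explicit entropy-production computation
$E_t+F^j_{,j}=-\sigma_{kl}\sigma^{kl}/(\mu\eps^2)-\tau^2/(\nu\eps^2)$,
which rests on the algebraic identity $\sigma_{kl}c^{klj}_{i}u^i_{,j}=\sigma^j_iu^i_{,j}$ and on the exact cancellation
$-u_i\sigma^{ij}_{,j}-\sigma^j_iu^i_{,j}+(\sigma^{ij}u_i)_{,j}=0$ (and its $\tau$-analogue), which is what gives \eqref{divEG<0}; and second, the observation that $E=\rho e+(m^2+S^2+\mT^2)/(2\rho)$ is a convex function of the conserved quantities $(\rho,m^j,S^{kl},\mT)$, so that the Friedrichs--Lax theorem \cite{FL} yields the symmetric hyperbolic form. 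Part (ii) then follows from standard local existence theory for symmetric hyperbolic systems. These two ingredients are not optional garnish for your proof of (iii): the uniform positivity of the symmetrizer and the favourable sign and skew/divergence structure of the singular terms in your error estimate are exactly what this computation delivers, so without it the $\epsilon$-uniform Gronwall inequality you rely on has no foundation.
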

\begin{proof}
(i) Noting that 
$$
\begin{aligned}
 \rho(u^i_t+u^ju^i_{,j})+\delta^{ij}p_{,j}+\frac1\epsilon(\sigma^{ij}+\tau\delta^{ij})_{,j}&=0,\\
\rho(\sigma^{kl}_t+u^j\sigma^{kl}_{,j})
+\frac1\epsilon c^{klj}_{i}u^i_{,j}&=-\frac{\sigma^{kl}}{\mu\eps^2},\\
\rho(\tau_t+u^j\tau)_{,j}
+\frac1\epsilon u^j_{,j}&=-\frac{\tau}{\nu\eps^2},
\end{aligned}
$$
and
$$
\sigma_{kl}c_i^{klj}
=
\sigma_{kl}(\frac12(\delta^k_i\delta^{lj}+\delta^l_i\delta^{kj})-\frac13 \delta^r_i\delta^j_r\delta^{kl})
=\sigma_i^j,
$$ 
one finds
$$
\begin{aligned}
&E_t+F^j_{,j}
\\
&=\rho(e+\frac12 u_iu^i+\frac12\sigma_{kl}\sigma^{kl}+\tau^2)_t
+\rho u^j(e+\frac12 u_iu^i+\frac12\sigma_{kl}\sigma^{kl}+\tau^2)_{,j}+(pu^j)_{,j}
+\frac1\epsilon((\sigma^{ij}+\tau\delta^{ij})u_i)_{,j}\\
&=\frac p\rho(\rho_t+u^j\rho_{,j})+\rho u_i(u^i_t+u^ju^i_{,j})
+\rho\sigma_{kl}(\sigma^{kl}_t+u^j\sigma^{kl}_{,j})+\rho\tau(\tau_t+u^j\tau_{,j})
+(pu^j)_{,j}
+\frac1\epsilon((\sigma^{ij}+\tau\delta^{ij})u_i)_{,j}\\
&=\!-pu^j_{,j}\!\!-u_i(\delta^{ij}p_{,j}+
\frac1\epsilon(\sigma^{ij}\!+\!\tau\delta^{ij})_{,j})
-\sigma_{kl}(\frac1\epsilon c^{klj}_{i}u^i_{,j}\!+\!\frac{\sigma^{kl}}{\mu\eps^2})
-\tau(\frac1\epsilon u^j_{,j}\!+\!\frac{\tau}{\nu\eps^2})
+(pu^j)_{,j}+\frac1\epsilon((\sigma^{ij}\!+\!\tau\delta^{ij})u_i)_{,j}\\
&=-u_i(\frac1\epsilon\sigma^{ij}_{,j})
-\sigma_{kl}(\frac1\epsilon c^{klj}_{i}u^i_{,j})
+\frac1\epsilon(\sigma^{ij}u_i)_{,j}-\sigma_{kl}\frac{\sigma^{kl}}{\mu\eps^2}-u_i(\frac1\epsilon(\tau\delta^{ij})_{,j})
-\tau(\frac1\epsilon u^j_{,j}+\frac{\tau}{\nu\eps^2})
+\frac1\epsilon(\tau\delta^{ij}u_i)_{,j}\\
&=\frac1\epsilon(-u_i\sigma^{ij}_{,j}
-\sigma^j_iu^i_{,j}+(\sigma^{ij}u_i)_{,j})
-\sigma_{kl}\frac{\sigma^{kl}}{\mu\eps^2}+\frac1\epsilon(-u_i(\tau\delta^{ij})_{,j}-\tau u^j_{,j}+(\tau\delta^{ij}u_i)_{,j})
-\frac{\tau^2}{\nu\epsilon^2}\\
&=-\frac{\sigma_{kl}\sigma^{kl}}{\mu\eps^2}-\frac{\tau^2}{\nu\epsilon^2}.
\end{aligned}
$$
This proves \eqref{divEG<0}. 

For solutions to the homogeneous system associated with \eqref{YG}, the 
same calculation yields the corresponding identity 
\be\label{divEG=0}
E_t+F^j_{,j}=0.
\ee
Since $E$ as a function of the conserved quantities $\rho, m^j=\rho u^j, S^{kl}=\rho\sigma^{kl}, \mT=\rho\tau$,
\be
E=\rho e+\frac{m^2+S^2+\mT^2}{2\rho}\quad\text{with }\quad
m^2={m_im^i},\  S^2={S_{kl}S^{kl}},
\ee
is convex,  
$$
D^2E={\begin{pmatrix}
     \displaystyle{\frac{p'(\rho)}{\rho}+\frac{m^2+S^2+\mT^2}{\rho^3}}&
     \displaystyle{-\frac{m^j}{\rho^2}}&
     \displaystyle{-\frac{S^{rs}}{\rho^2}}
     \displaystyle{-\frac{\mT}{\rho^2}}&0\\
     \displaystyle{-\frac{m_i}{\rho^2}}&\displaystyle{\frac1\rho \delta_i^j}&0&0 \\
     \displaystyle{\frac{S_{kl}}{\rho^2}}&0&\displaystyle{\frac1\rho\delta_{kl}^{rs}}&0\\
     \displaystyle{-\frac{\mT}{\rho^2}}&0&0&\displaystyle{\frac1\rho}
\end{pmatrix}>0},
$$
the remaining part of the assertion follows from \cite{FL}.

Assertions (ii) and (iii) follow in direct analogy with Yong's beautiful argument for his Theorem 3.1  in \cite{Y}. 
\end{proof}

Very similar considerations hold for nonbarotropic fluids, 
$e=e(v,\varsigma)$ with 
\be\label{D2e}
D^2e(v,\varsigma)>0,\ p=-e_v(v,\varsigma),
\ee
the equations of motion now reading
\be\label{YGnb}
\begin{aligned}
\rho_t+(\rho u^j)_{,j}&=0\\
(\rho u^i)_t +(\rho u^ju^i+p\delta^{ij})_{,j}+\frac1\epsilon(\sigma^{ij}+\tau\delta^{ij})_{,j}&=0\\
E_t+F^j_{,j}&=0\\
(\rho\sigma^{kl})_t+(\rho u^j\sigma^{kl})_{,j}
+\frac1\epsilon c^{klj}_{i}u^i_{,j}&=-\frac{\sigma^{kl}}{\mu\eps^2}\\
(\rho\tau)_t+(\rho u^j\tau)_{,j}
+\frac1\epsilon u^j_{,j}&=-\frac{\tau}{\nu\eps^2}.
\end{aligned}
\ee
The same calculation as above, now taking into account that in this situation
$e$ and $p$ depend also on the specific entropy $\varsigma$, leads to the identity
\be\label{sup}
\rho\theta(\varsigma_t+u^j\varsigma_{,j})=
\frac{\sigma_{kl}\sigma^{kl}}{\mu\epsilon^2}+\frac{\tau^2}{\nu\epsilon^2}
\ee
where $\theta=e_s(v,\varsigma)$ is the temperature.
Equation \eqref{sup} implies that 
$\eta_t+\zeta^j_{,j}\ge 0$,
i.e., the second law of thermodynamics holds with
\be\label{EEFnb}
\eta=\rho\varsigma,\quad \zeta^j=\rho \varsigma u^j.
\ee
For solutions to the asscociated homogeneous system, the calculation shows the corresponding identity
\be
\eta_t+\zeta^j_{,j}=0,
\ee  
which means that $(\eta,\zeta^j)$ is an (entropy, entropy flux) pair for it. Assumption \eqref{D2e}
implying that $E$ is a convex function of the conserved quantities $\rho,m^i,S^{kl},\mT,\eta$, we see 
that $-\eta$ is a convex function of the conserved quantities 
$\rho,m^i,E,S^{kl},\mT$.

We have thus proven 
\begin{theo}
Using the (entropy, entropy flux) pair $(\eta,\zeta^j)$,  
system \eqref{YGnb} can be written as a symmetric hyperbolic system of balance laws.
All its solutions generate entropy,
\be\label{2ndlaw}
\eta_t+\zeta^j_{,j}\ge 0.
\ee  
(ii) For any fixed values of $\epsilon >0$ and $\rho_*,\theta_*>0$ and any data 
$({^0\rho},{^0\theta},{^0u}^i,{^0\sigma}^{kl},{^0\tau})$ such that 
$({^0\rho}_0-\rho_*,{^0\theta}-\theta_*,{^0u}^i,{^0\sigma}^{kl},{^0\tau})\in H^s(\R^d), 
s>d/2+1,$ and ${^0\rho},{^0\theta}>0$ are 
uniformly bounded away from $0$,
there exists a $\bar t>0$ such that system \eqref{YGnb} has a solution in $C^1([0,\bar t],H^s(\R^d))$ 
that assumes these data.\\
(iii) If for certain initial data, the nonbarotropic Navier-Stokes equations 
\be\label{NSnb}
\begin{aligned}
\rho_t+(\rho u^j)_{,j}&=0\\
(\rho u^i)_t +(\rho u^ju^i+p\delta^{ij})_{,j}&=
(\mu c^{ijl}_ku^k_{,l}+\nu\delta^{ij}u^k_{,k})_{,j}
\\
\hat E_t+\hat F^j_{,j}&=
((\mu c^{ijl}_ku^k_{,l}+\nu\delta^{ij}u^k_{,k})u_i)_{,j}
\end{aligned}
\ee
with 
\be
\hat E=\rho (e+\frac12 u^2),\quad \hat F^j=(\hat E+p)u^j
\ee
have a solution in $C^1([0,\bar t],H^s(\R^d))$ for some $\bar t>0$,
then there exists an $\bar\epsilon>0$ such that for every $\epsilon\in(0,\bar\epsilon)$ 
system \eqref{YGnb} has a unique 
solution in $C^1([0,\bar t],H^s(\R^d))$ for the same data, augmented via
$\sigma^{kl}=\epsilon\mu c_i^{klj} u^i_{,j}, \tau=\epsilon\nu u^j_{,j}$, 
and the $(\rho,u,s)$-parts of these solutions converge, 
as $\epsilon\searrow 0$, to the solution of \eqref{NSnb}. 
\end{theo}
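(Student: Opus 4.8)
The argument runs parallel to that of Theorem 1; I indicate the plan and where the real work lies.

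(i) Almost everything is already contained in the derivation of \eqref{sup} above. Dividing \eqref{sup} by $\rho\theta>0$ and using $\eta=\rho\varsigma$ together with the continuity equation to rewrite $\eta_t+\zeta^j_{,j}=\rho(\varsigma_t+u^j\varsigma_{,j})$ gives \eqref{2ndlaw}; the same computation applied to the homogeneous system associated with \eqref{YGnb} yields $\eta_t+\zeta^j_{,j}=0$, so $(\eta,\zeta^j)$ is an (entropy, entropy flux) pair for it. What remains is the convexity claim: with $E=\rho e(v,\varsigma)+(m^2+S^2+\mT^2)/(2\rho)$, $v=1/\rho$, $\eta=\rho\varsigma$, the $(m^i,S^{kl},\mT)$-block of the Hessian of $E$ in the variables $(\rho,m^i,S^{kl},\mT,\eta)$ is $\rho^{-1}$ times the identity, while the complementary $(\rho,\eta)$-block is positive definite precisely because $D^2e(v,\varsigma)>0$ by \eqref{D2e} (a routine change of variables $(v,\varsigma)\leftrightarrow(\rho,\eta)$), and moreover $\partial E/\partial\eta=\theta>0$. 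Hence $E$ is strictly convex in $(\rho,m^i,S^{kl},\mT,\eta)$, which by the Legendre duality between $E$ and $\eta$ is equivalent to $-\eta$ being strictly convex in the conserved quantities $(\rho,m^i,E,S^{kl},\mT)$. The Friedrichs--Lax theorem \cite{FL} then recasts \eqref{YGnb} as a symmetric hyperbolic system of balance laws, completing (i).

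(ii)--(iii) For these I would follow Yong's proof of his Theorem 3.1 in \cite{Y} essentially line by line, exactly as for Theorem 1. For (ii): given the symmetric hyperbolic structure from (i) and data perturbing the constant state $(\rho_*,\theta_*,0,0,0)$ in $H^s(\R^d)$, $s>d/2+1$, with $\rho,\theta$ bounded away from $0$, local existence in $C^1([0,\bar t],H^s(\R^d))$ is the standard quasilinear symmetric hyperbolic existence theorem; the open constraints $\rho>0$, $\theta>0$ persist on a possibly shorter interval. For (iii): assuming \eqref{NSnb} has a solution $(\rho,u,\varsigma)\in C^1([0,\bar t],H^s)$ for the given data, I would augment these data by the first-order Maxwell (Chapman--Enskog) correction $\sigma^{kl}=\eps\mu c^{klj}_i u^i_{,j}$, $\tau=\eps\nu u^j_{,j}$, solve \eqref{YGnb} with this datum, and establish a priori estimates on $[0,\bar t]$ uniform in $\eps$. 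These rely on the partially dissipative structure made explicit by \eqref{sup}: the production $\sigma_{kl}\sigma^{kl}/(\mu\eps^2)+\tau^2/(\nu\eps^2)$ damps $\sigma,\tau$ at order $\eps^{-2}$, while the coupling terms $\eps^{-1}c^{klj}_i u^i_{,j}$ and $\eps^{-1}u^j_{,j}$ feed this dissipation into the momentum equation, producing $\eps$-independent bounds on $(\rho,u,\varsigma)$ and $O(\eps)$ bounds on the deviation of $(\sigma,\tau)$ from equilibrium. A continuation argument then extends the solution of \eqref{YGnb} to all of $[0,\bar t]$ independently of $\eps$, and the same bounds allow passage to the limit $\eps\searrow0$, in which \eqref{YGnb} reduces to \eqref{NSnb}; uniqueness for \eqref{NSnb} identifies the limit with $(\rho,u,\varsigma)$. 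The sole structural novelty relative to \cite{Y} is the extra thermodynamic variable $\varsigma$, which is transported by $u$ and, like $\rho$, decouples from the relaxation, so Yong's energy estimates carry over with only notational changes.

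I expect the genuinely delicate step to be the uniform-in-$\eps$ estimate in (iii) --- concretely, verifying that \eqref{YGnb} linearized about equilibrium satisfies the Kawashima--Shizuta (genuine coupling) condition, so that the $O(\eps^{-2})$ dissipation present in $(\sigma,\tau)$ actually reaches the hydrodynamic variables on the time interval on which the limit solution exists. This holds for the same reason as in Yong's work: modulo its trace part, $c^{klj}_i$ acts as the identity and $u^j_{,j}$ couples $\tau$ to $u$, so the dissipative mechanism is exactly the one he exploits. Everything else is bookkeeping parallel to the proof of Theorem 1.
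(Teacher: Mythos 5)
Your proposal follows essentially the same route as the paper: identity \eqref{sup} gives \eqref{2ndlaw} and its homogeneous counterpart, convexity of $E$ in $(\rho,m^i,S^{kl},\mT,\eta)$ plus $\partial E/\partial\eta=\theta>0$ yields convexity of $-\eta$ in $(\rho,m^i,E,S^{kl},\mT)$ and hence symmetric hyperbolicity via \cite{FL}, and (ii)--(iii) are delegated to Yong's argument for his Theorem~3.1, exactly as the paper does (your extra detail on the Kawashima--Shizuta coupling and uniform-in-$\epsilon$ estimates is a faithful elaboration of that reference, not a different method). One small caution on your convexity step: the Hessian of $E$ is \emph{not} block diagonal in $(\rho,\eta)$ versus $(m^i,S^{kl},\mT)$ --- there are off-diagonal entries $-m^i/\rho^2$, $-S^{kl}/\rho^2$, $-\mT/\rho^2$, visible in the matrix displayed in the proof of Theorem~1 --- so positive definiteness of the two diagonal blocks alone does not close the argument; the clean fix is to write $E$ as the sum of the two perspective functions $\rho\,e(1/\rho,\eta/\rho)$ and $(m^2+S^2+\mT^2)/(2\rho)$, each jointly convex for $\rho>0$ (equivalently, take Schur complements), after which your conclusion stands.
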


\textit{Shock waves.} For general fluids with convex internal energy, $e''(v)>0$ or 
$D^2e(v,\varsigma)>0$ as considered
already so far, both the barotropic and the nonbarotropic Navier-Stokes equations support 
shock waves in the sense of heteroclinic traveling wave solutions 
\be\label{RU}
(\rho(t,x),u(t,x))=(R(x\cdot n-ct),U(x\cdot n-ct))
\ee
or
\be\label{RUS}
(\rho(t,x),u(t,x),\theta(t,x))=(R(x\cdot n-ct),U(x\cdot n-ct),\Theta(x\cdot n-ct)),
\ee
respectively \cite{Gi,Sm,Da}, where $n\in S^{d-1}$ is the direction and $c$ the speed
of propagation and $(R,U)$, or $(R,U,\Theta)$ respectively, is the ``profile'' of the shock,
a heteroclinic solution of the associated ODE, with limits 
\be\label{endstates}
(R,U)(\pm\infty)=(\rho^\pm,u^\pm)\!
\text{ or }\!
(R,U,\Theta)(\pm\infty)=(\rho^\pm,u^\pm,\theta^\pm),\ \! 
\text{ respectively,}
\ee
that, as the respective pairs of downstream and upstream states, fulfil the Rankine-Hugoniot jump
conditions.  
The time-asymptotic stability of these viscous shock waves against planar (``longitudinal``, 
``onedimensional'') perturbations has early on been studied in 
\cite{Gm,LiuGen,LiuNSt,SX} and then, Zumbrun et al.\ have shown \cite{ZH,MZ03,MZ04s,MZ04l}
that the nonlinear stability 
reduces completely to the so-called Evans function condition, that is, the requirement that   
the Evans function $D=D^v$, a certain analytic function defined on an open superset of the closed
right half $\bH=\{\lambda:\text{Re}(\lambda)\ge0\}$ of $\C$ (cf., e.\ g., \cite{GZ})
has precisely one zero on $\bH$, namely at $0$,
and this zero is simple. \\
Zumbrun and co-workers have also studied shock profiles in systems of balance 
laws (``relaxed conservation laws``) \cite{MZ02,PZ,MZ05,MZ09}, again with the result that the nonlinear 
stability reduces to the Evans function condition, now applied to $D=D^r$, the relaxation 
counterpart of the viscous Evans function $D^v$.

Now, \eqref{YG} and \eqref{YGnb} are relaxed systems of conservation laws and as such obvious 
candidates for the application of Zumbrun's stability theory. We observe, first in one space dimension:
\begin{theo}
Consider any viscous shock wave solution $(R,U)$ or $(R,U,\Theta)$
of \eqref{NS} or \eqref{NSnb}, respectively. Then the following hold:
(i) For sufficiently small $\epsilon>0$, also the respective relaxed system, i.e., \eqref{YG} or
\eqref{YGnb}, admits a shock wave, $(R_\epsilon,U_\epsilon,\Sigma_\epsilon,T_\epsilon)$ 
or $(R_\epsilon,U_\epsilon,\Theta_\epsilon,\Sigma_\epsilon,T_\epsilon),$ 
with 
$$
(R_\epsilon,U_\epsilon)(\pm\infty)=(R,U)(\pm\infty)
\quad\text{or}\quad
(R_\epsilon,U_\epsilon,\Theta_\epsilon)(\pm\infty)=(R,U,\Theta)(\pm\infty),
\ \ \text{respectively.}
$$
and 
$$
\Sigma_\epsilon(\pm\infty)=T_\epsilon(\pm\infty)=0;
$$ 
these relaxation profiles are regular perturbations of the Navier-Stokes profiles,
$$
\lim_{\eps\to0}(R_\epsilon,U_\epsilon,\Sigma_\epsilon+T_\eps)=(R,U,-{\tilde\mu} U')
\text{ or }
\lim_{\eps\to0}(R_\epsilon,U_\epsilon,\Theta_\epsilon,\Sigma_\epsilon+T_\eps)=(R,U,\Theta,-{\tilde\mu} U'),
\text{respectively.}
$$
(ii) There exist an $\bar\epsilon>0$ and an open superset $\Lambda$ of $\bH$ in $\C$ such that the 
Evans function $D^v$ of the Navier-Stokes shock profile 
and the Evans functions $D^r_\epsilon,0<\epsilon<\bar\epsilon,$ 
of the relaxation profiles are all defined on $\Lambda$ and satisfy there
\be
\lim_{\epsilon\to 0}D^r_\epsilon=D^v,\quad\text{uniformly on compacta.}
\ee
\end{theo}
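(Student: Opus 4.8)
\emph{Proof proposal.} Fix the speed $c$ and direction $n\in S^{d-1}$ of the given Navier--Stokes profile and pass to $\xi=x\cdot n-ct$, writing $'=d/d\xi$. Seeking planar traveling waves turns \eqref{YG} (resp.\ \eqref{YGnb}) into an ODE system in $\xi$; its conservation laws --- mass and momentum, and in the nonbarotropic case also $E_t+F^j_{,j}=0$ --- integrate once, with constants of integration fixed by the Rankine--Hugoniot relations and hence equal to those of the Navier--Stokes profile (at $\xi=\pm\infty$ one has $\sigma=\tau=0$, $u'=0$). Writing the stress variables in the normalization $\Sigma_\eps=\sigma/\eps$, $T_\eps=\tau/\eps$ under which they remain $O(1)$ along the profile (as in the theorem's limit relation), the integrated momentum balance $m_0U^i+pn^i+\Sigma^{ij}_\eps n_j+T_\eps n^i=\text{const}$ fixes $\Sigma^{ij}_\eps n_j+T_\eps n^i$ algebraically in terms of $(R,U)$ (and, with the energy integral, of $(R,U,\Theta)$), while the $\sigma$- and $\tau$-balances read
\[
\mu\,m_0\,\eps^2\,(\Sigma^{kl}_\eps)'=-\Sigma^{kl}_\eps-\mu\,c^{klj}_i n_j\,(U^i)',\qquad \nu\,m_0\,\eps^2\,T_\eps'=-T_\eps-\nu\,n_j(U^j)',
\]
with $m_0=R\,(U\cdot n-c)\neq0$ the constant mass flux through the shock.

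\emph{Part (i).} These equations display \eqref{YG}/\eqref{YGnb} as a slow--fast system with small parameter $\eps^2$: the stresses $\Sigma_\eps,T_\eps$ are fast variables slaved with rate $\sim(\mu|m_0|\eps^2)^{-1}$, taking at $\eps=0$ the equilibrium values $\Sigma^{kl}=-\mu c^{klj}_in_j(U^i)'$, $T=-\nu n_j(U^j)'$; inserting these into the momentum (and energy) integrals, and using the integrated conservation laws to eliminate the remaining thermodynamic unknowns (possible since \eqref{NS}/\eqref{NSnb} carry no heat conduction), one recovers exactly the scalar Navier--Stokes profile ODE $\tilde\mu\,(U\cdot n)'=h(U\cdot n)$ whose heteroclinic is the given $(R,U)$ (resp.\ $(R,U,\Theta)$), with $h$ vanishing hyperbolically at the end states (Laxness/genuine nonlinearity) and only there. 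Since $m_0\neq0$ along this compact orbit and at its end points, the critical manifold is normally hyperbolic there, so geometric singular perturbation theory (Fenichel) yields, for small $\eps$, an invariant slow manifold $M_\eps$ containing the two equilibria $(\rho^\pm,u^\pm)$ (resp.\ with $\theta^\pm$) --- rest points of the full relaxed system for every $\eps$, the sources vanishing at equilibrium --- on which the reduced flow is an $O(\eps^2)$ perturbation $\tilde\mu_\eps\,(U\cdot n)'=h_\eps(U\cdot n)$, $(\tilde\mu_\eps,h_\eps)\to(\tilde\mu,h)$. The hyperbolic zeros of $h_\eps$ persist, hence so does the connecting orbit, producing $(R_\eps,U_\eps,\Sigma_\eps,T_\eps)$ (resp.\ with $\Theta_\eps$) with the asserted end states and $\Sigma_\eps(\pm\infty)=T_\eps(\pm\infty)=0$; since no $\xi$-rescaling enters, the convergence to $(R,U,-\tilde\mu U')$ (with $\tilde\mu=\tfrac23\mu+\nu$ the effective longitudinal viscosity) is regular (in $C^k$, uniformly on $\R$ up to translation) and is simply the $\eps\to0$ reading of the slaving relations on $M_\eps$.

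\emph{Part (ii).} Linearizing \eqref{YG}/\eqref{YGnb} about $(R_\eps,U_\eps,\dots)$ in the moving frame and Laplace-transforming in $t$ gives a first-order eigenvalue ODE $W'=A_\eps(\xi,\lambda)W$; the same reduction writes $A_\eps$ --- in its two constant-coefficient far-field limits and along the profile --- as a singular perturbation of the Navier--Stokes eigenvalue coefficient plus a fast block of size $\sim(\mu|m_0|\eps^2)^{-1}$ that is hyperbolic (no center directions) uniformly in $\eps$ for $\lambda$ in compacta. Hence the consistent-splitting/Gap-Lemma region is uniformly open in $\eps$, so $D^v$ and all $D^r_\eps$ extend analytically to one common open $\Lambda\supset\bH$; and the conjugation-and-tracking machinery for relaxed conservation laws \cite{MZ02,PZ,MZ05,MZ09} --- tracking the slow (Navier--Stokes) stable/unstable subspaces across the uniformly attracting/repelling fast block and block-triangularizing the flow into a fast and a slow part --- shows that the stable subspace at $+\infty$ and unstable subspace at $-\infty$ of $W'=A_\eps W$ converge, as $\eps\to0$ and locally uniformly in $\lambda$, to their Navier--Stokes counterparts, the extra fast directions contributing --- after the usual normalization of the Evans function by the far-field decay rates --- a factor tending to $1$. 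Forming the Wronskian at $\xi=0$ then gives $D^r_\eps\to D^v$ uniformly on compacta of $\Lambda$.

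\emph{Main obstacle.} The delicate step is the last part of (ii): the uniform-in-$\eps$ control, down to $\eps=0$, of the singularly perturbed stable/unstable bundles --- i.e.\ carrying out the relaxation-to-viscous Evans-function reduction in this concrete system: uniform hyperbolicity of the fast block on a fixed neighborhood of $\bH$, uniform tracking/gap estimates, and harmlessness of the ``fast'' Evans factor. These are precisely the estimates of \cite{MZ02,PZ,MZ05,MZ09}; what remains is to check that their structural/dissipativity hypotheses on the relaxation hold for \eqref{YG} and \eqref{YGnb}, which is plausible given the convexity of $E$ and the Kawashima-type dissipation $-\sigma_{kl}\sigma^{kl}/(\mu\eps^2)-\tau^2/(\nu\eps^2)$ already exploited in Theorems 1 and 2.
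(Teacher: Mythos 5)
Your part (i) arrives at the correct conclusion, but by a much heavier route than is needed, and your part (ii) contains a genuine gap that you yourself flag: you reduce the Evans-function convergence to ``the estimates of \cite{MZ02,PZ,MZ05,MZ09}'' and leave open the verification of their structural hypotheses. The point of the theorem --- and of the paper's proof --- is precisely that none of that singular-perturbation machinery is required here. In the scaling in which the stresses are $O(1)$ (the one the paper adopts in \eqref{Y1D}), the profile ODE \eqref{relprofileODE} and the eigenvalue ODE \eqref{EVP} are \emph{regular} perturbations of their Navier--Stokes counterparts \eqref{NSprofileODE}, \eqref{EVP0}: the phase space has the \emph{same} dimension for all $\epsilon\ge0$ (three in the barotropic case, four in the nonbarotropic case), because the relaxation variable exactly replaces the derivative $u_x$ rather than adding a fast direction. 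Concretely, solving \eqref{EVP} for $(r',\upsilon',s')$ involves inverting a matrix whose determinant is $-U+O(\epsilon)$, and $U\neq 0$ uniformly along the profile since the mass flux $m=RU\neq0$; hence $W'=A_\epsilon(\xi,\lambda)W$ with $A_\epsilon$ depending analytically on $\epsilon$ \emph{down to and including} $\epsilon=0$. There is no fast block, no dimension mismatch, no ``extra fast directions contributing a factor tending to $1$,'' and no tracking/gap estimate in $\epsilon$ to be performed: $D^r_\epsilon\to D^v$ uniformly on compacta is ordinary continuous dependence of the Evans function on a regular parameter (plus the standard uniform large-$|\lambda|$ argument). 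Your framing of the stress equations as a slow--fast system with rate $(\mu|m_0|\epsilon^2)^{-1}$ misses this structure, and it is the reason your part (ii) cannot be closed as written.

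The same remark applies, less critically, to part (i). You correctly note that the integrated momentum (and energy) balances determine the normal stress components algebraically along the profile, but you then nevertheless invoke Fenichel theory for a fast variable that, after that reduction, no longer exists; in one space dimension (to which the theorem's proof restricts, with $\tau$ absorbed into $\tilde\mu=\tfrac23\mu+\nu$) the profile equation is a regularly perturbed scalar ODE $U'(1+O(\epsilon))=-\Sigma(U)/\tilde\mu$, and persistence of the heteroclinic follows from hyperbolicity of the end states and transversality, exactly as you conclude. So (i) is essentially right modulo the unnecessary GSPT detour; (ii) needs to be redone along the regular-perturbation lines above.
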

\begin{proof}
We discuss only the barotropic case (as the nonbarotropic case can be treated in complete analogy). 
Rescaling (and noticing that in 1D, setting $\tilde\mu=(2/3)\mu+\nu$, $\tau$ is redundant) 
we see that system \eqref{YG} reads
\be\label{Y1D}
\begin{aligned}
\rho_t+(\rho u)_x&=0\\
(\rho u)_t+(\rho u^2+p(1/\rho))+\sigma)_x&=0\\
\epsilon((\rho\sigma)_t+(\rho u\sigma)_x)+u_x&=-\frac\sigma{\tilde\mu} . 
\end{aligned}
\ee
(i) Choosing, due to Galilean invariance w.\ l.\ o.\ g., $n=1$ and $c=0$, the ODE governing 
the relaxation shock profiles $(R_\epsilon,U_\epsilon,\Sigma_\epsilon)$ is 
\be\label{relprofileODE}
\begin{aligned}
(RU)'&=0\\
(RU^2+p(1/R)+\Sigma)'&=0\\
\epsilon(RU\Sigma)'+U'&=-\frac\Sigma{\tilde\mu}.
\end{aligned}
\ee
This system is a regular perturbation of its $\epsilon\to 0$ limit 
\be\label{NSprofileODE}
\begin{aligned}
(RU)'&=0\\
(RU^2+p(1/R)+\Sigma)'&=0\\
U'&=-\frac\Sigma{\tilde\mu},
\end{aligned}
\ee
which has the same rest points. 
But \eqref{NSprofileODE} is the system governing the Navier-Stokes shock profiles.
In both ODEs, $\Sigma$ vanishes at the rest points; in the latter one, $\Sigma=-{\tilde\mu} U'$ identically
along the profile. Going up from \eqref{NSprofileODE} to \eqref{relprofileODE}, any heteroclinic
solution persists uniquely for small $\epsilon>0$, as for $\epsilon=0$ its orbit is a 
\emph{transverse} intersection
of the unstable manifold of one with the stable manifold of the other hyperbolic fixed point 
(cf., e.g., \cite{GZ,ZS,FS1} for the idea). 

(ii) The Evans function for a shock profile $(R,U,\Sigma)$ of the relaxed system \eqref{Y1D}
is a particular Wronskian of solutions of the eigenvalue problem for the linearization 
of \eqref{Y1D} around the profile,
\be\label{EVP}
\begin{aligned}
\lambda r+Ur'+R\upsilon'&=0\\
\lambda(Ur+R\upsilon)+U^2r'+2m\upsilon'-R^{-2}p'(1/R)r'+s'&=0\\
\epsilon\lambda Rs+\epsilon ms'+\upsilon'&=\frac{-s}{\tilde\mu};
\end{aligned}
\ee 
in \eqref{EVP}, $m$($\neq 0$) is the constant value that $RU$ takes along the profile according to the first 
line of \eqref{NSprofileODE}; note that the mass transfer $m$ is $\neq 0$ for any genuine shock, and 
thus $U\neq0$ uniformly along the shock profile. System \eqref{EVP} again is a 
regular perturbation of its $\epsilon\to 0$ limit 
\be\label{EVP0}
\begin{aligned}
\lambda r+Ur'+R\upsilon'&=0\\
\lambda(Ur+R\upsilon)+U^2r'+2m\upsilon'-R^{-2}p'(1/R)r'+s'&=0\\
\upsilon'&=\frac{-s}{\tilde\mu},
\end{aligned}
\ee 
and this immediately implies that for any $C>0$, there exist $\delta(C),\bar\epsilon >0$ such that 
the Evans functions $D^v$  and $D^r_\epsilon, 0<\epsilon<\bar\epsilon,$ 
of \eqref{EVP0} and  \eqref{EVP} respectively, are well-defined 
on the domain 
$$\Lambda_C=\{\lambda\in \C:|\lambda|\le C
\text{ and Re}(\lambda) \ge -\delta(C)\}$$   
and satisfy $\lim_{\epsilon\to 0}D^r_\epsilon=D^r$ uniformly on $\Lambda_C$. If now $D^v$ has only one, simple,
zero on $\Lambda_C$, the same thus obviously holds for $D^r_\epsilon$ as soon as 
$\epsilon$ is sufficiently small.
(That this zero is $0$ follows from the general fact that $0$ always is an eigenvalue (namely
the one corresponding to shifting the profile; cf., e.g., \cite{GZ}) or \cite{FS1}.) 
Finally, the standard arguments (cf., e.g., again \cite{GZ}) showing the absence 
of Evans function zeroes outside a circle  $|\lambda|\ge C$ of sufficiently large radius $C$ here 
trivially apply \emph{uniformly} to \eqref{EVP0} and \eqref{EVP}, so that altogether, 
for sufficiently small $\epsilon>0$, 
$D^r_\epsilon$ satisfies the complete Evans function condition when $D^v$ does. 
\end{proof}
\vspace{-.5cm} 
The cited works of Zumbrun and coworkers now readily imply the following. 
\begin{coro}
For any Navier-Stokes shock that satisfies Majda's condition on the  
Lopatinski determinant \cite{Ma} and Zumbrun's Evans function condition,
all corresponding relaxation shocks as identified in Assertion (i) of Theorem 3
with sufficently small $\epsilon>0$ are nonlinearly stable with respect to 
Sobolev norms chosen appropriately according to Zumbrun theory. 
\end{coro}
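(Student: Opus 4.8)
\textit{Plan.} I would prove the corollary by checking that, for $\epsilon$ sufficiently small, the relaxation shock profiles furnished by Assertion~(i) of Theorem~3 fulfil every hypothesis of the nonlinear stability theory of Zumbrun and coworkers for shock profiles of relaxed conservation laws \cite{MZ02,PZ,MZ05,MZ09}, and then invoke the pertinent theorem there. Three things must be verified: the structural assumptions on the system, the low-frequency (Lopatinski) condition, and the Evans function condition.

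For the structural side, Theorems~1 and~2 already present \eqref{YG} and \eqref{YGnb} as symmetrizable hyperbolic systems of balance laws with a strictly convex entropy and a relaxation source that vanishes precisely on the equilibrium manifold $\{\sigma^{kl}=0,\ \tau=0\}$, on which the dynamics reduce to \eqref{NS}, resp.\ \eqref{NSnb}. From the explicit form of $c^{klj}_i$ and the positivity $-p'(v)=e''(v)>0$ (resp.\ $D^2e>0$) one reads off the genuine-coupling (Kawashima--Shizuta) condition, the subcharacteristic condition, and the constant-rank/block structure near the endstates that those works assume, using that the reduced system is the hyperbolic Euler system and the shocks in question are Lax shocks. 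For the Lopatinski condition, the point is that the reduced (inviscid) Lopatinski determinant entering Zumbrun's relaxation framework is, in the limit $\epsilon\to0$, determined by the equilibrium system and the endstates alone; since the endstates of $(R_\epsilon,U_\epsilon,\Sigma_\epsilon,T_\epsilon)$ --- resp.\ $(R_\epsilon,U_\epsilon,\Theta_\epsilon,\Sigma_\epsilon,T_\epsilon)$ --- coincide with those of the Navier--Stokes profile, hence with the underlying Euler shock, Majda's uniform Lopatinski condition \cite{Ma} for that Euler shock passes, by continuity in $\epsilon$, to the relaxation profile for all sufficiently small $\epsilon$.

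For the Evans function condition I would run the argument of Theorem~3 with a transverse frequency $\xi$ as an extra parameter: for each fixed $\xi$ the eigenvalue system for \eqref{YG}/\eqref{YGnb} linearized about the relaxation profile is again a regular perturbation of its $\epsilon\to0$ limit, the Navier--Stokes eigenvalue system, so the multidimensional Evans functions satisfy $D^r_\epsilon(\cdot,\xi)\to D^v(\cdot,\xi)$ uniformly on compacta; together with high-frequency bounds uniform in $\epsilon$, this forces $D^r_\epsilon$, for small $\epsilon$, to vanish on the closed right half plane $\bH$ only at the origin and there with the same low-frequency factorization (hence compatible with the Lopatinski determinant) as $D^v$. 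Combining the three points, the hypotheses of the relevant one of \cite{MZ02,PZ,MZ05,MZ09} --- in the multidimensional case with the Lopatinski input of \cite{Ma} --- hold for the relaxation profiles with $0<\epsilon<\bar\epsilon$, and these profiles are therefore nonlinearly time-asymptotically stable in the Sobolev norms prescribed by that theory.

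The main obstacle is the Evans estimate in the multidimensional setting: unlike the purely one-dimensional bounds used for Theorem~3, one must show that the large-$|\xi|$ and large-$|\lambda|$ estimates degenerate neither as $|\xi|\to\infty$ nor as $\epsilon\searrow0$, which amounts to recovering the parabolic smoothing of the Navier--Stokes operator from the relaxation structure uniformly in $\epsilon$ --- a Chapman--Enskog-type energy estimate for the linearized relaxed operator. The second, more clerical, point of care is checking that \eqref{YG} and \eqref{YGnb} exactly meet the block-structure and constant-rank hypotheses under which \cite{MZ05,MZ09} are stated; the symmetric-hyperbolic-with-convex-entropy structure of Theorems~1 and~2 should make this routine.
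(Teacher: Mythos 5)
Your proposal follows essentially the same route as the paper, which offers no detailed proof but simply asserts that the cited Mascia--Zumbrun relaxation-stability theorems apply once the Evans-function convergence of Theorem~3 is noted, and that this reasoning ``carries over verbatim'' to the multidimensional Evans function $D(\lambda,\omega)$; your explicit verification of the structural hypotheses, the Lopatinski condition (which depends only on the common endstates), and the $\xi$-parametrized Evans convergence is precisely what the paper is implicitly invoking. The two obstacles you flag --- uniformity of the high-frequency estimates in both $\epsilon$ and the transverse frequency, and the exact block-structure hypotheses of \cite{MZ05,MZ09} --- are real points of care that the paper passes over with ``readily imply,'' so your more cautious accounting is, if anything, the more complete version of the same argument.
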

\vspace{-.5cm}
Note that the corollary holds also regarding non-planar perturbations, i.e., as a fully
multidimensional result. This is obvious from the fact that the above reasoning 
carries over verbatim to the multidimensional Evans function $D=D(\lambda,\omega)$ whose 
second argument is a transverse Fourier frequency (cf.\ \cite{Z,ZS,FS2}). 

The use of the corollary lies in the fact the two said conditions have been verified 
for large families of Navier-Stokes shocks, of small and large amplitude, cf.\ 
\cite{HZ02,BHRZ08,HLZ09,HLZ17}. ---

\textit{Ruggeri's model.}
Models like the above belong to the realm of rational extended thermodynamics \cite{MR}. 
Exactly that model was indeed proposed almost 40 years ago by Ruggeri~\cite{R}.
\begin{defi}\label{GV4P}
(i) One calls $Y=(Y_0,\ldots,Y_n)$ \emph{Godunov variables}
and a vector $(X^\alpha(Y))_{\alpha=0,....,3}$ a 
\emph{4-potential} for a symmetric system of conservation laws
\be\label{claws}
\partial_\alpha F^{\alpha b}(Y)=I^{\alpha b}(Y),\quad b=0,\ldots,n,
\ee
if the spatiotemporal fluxes $F^{\alpha b}$ derive from $X^\alpha$ as  
\be\label{Fis}
F^{\alpha b}(Y)=\frac{\partial X^\alpha(Y)}{\partial Y_b}, \quad\alpha=0,...,3,\ b=0,...,n. 
\ee
(ii) In that case, system \eqref{claws} is called \emph{symmetric-hyperbolic} if 
\be\label{Hessianpositive}
\left(\frac{\partial^2 X^0(Y)}{\partial Y_b\partial Y_c}\right)_{b,c=0,\ldots,n}
\quad \text{is definite.} 
\ee
(iii) In the situation of (i), a scalar function $X(Y)$ is called a \emph{protopotential} for 
system \eqref{claws} if 
$$
X^\alpha(Y)=\frac{\partial X(Y)}{\partial Y_ \alpha}.
$$
(iv) If a system as in (i) is Lorentz invariant, it is called \emph{causal} if  
\be\label{Hessianpositive}
\left(\frac{\partial^2 X^\alpha(Y)}{\partial Y_b\partial Y_c}T_\alpha\right)_{b,c=0,\ldots,n}
\quad \text{is definite for all }T_\alpha \text{ with }T_\alpha T^\alpha<0. 
\ee
\end{defi}
\newpage
For the Euler equations of classical inviscid fluid dynamics, the Godunov variables 
\be\label{GVnonrelEuler}
\tilde\psi\equiv\psi-\frac{u^2}{2\theta},
\ \tilde u^i\equiv\frac {u^i}\theta,
\ \tilde\theta\equiv\frac 1\theta
\ee
(where $\psi=g/\theta$ with $g=e+pv-\theta\varsigma$ the chemical potential)
and the corresponding 4-potential 
\be\label{GPnonrelEuler}
X_E^0=\frac p\theta,\quad X_E^i=\frac p\theta u^i   
\ee
were proposed by Godunov \cite{Go} (while the production term is $I_E^{\alpha b}=0$
in that case).
In \cite{R}, Ruggeri augmented the list \eqref{GVnonrelEuler} as
\be\label{GVnonrelExtended} 
\tilde\psi,\ \tilde u^i,\ \tilde\theta,
\ \tilde\sigma^{ij}\equiv\frac{\sigma^{ij}}\theta,
\ \tilde\tau\equiv\frac\tau\theta,  
\ \tilde q^i \equiv\frac{q^i}{\theta^2},
\ee
and showed two things:
(a) With Godunov variables \eqref{GVnonrelExtended}, the 4-potential
\be\label{GPnonrelNSF}
X^0=\frac p\theta,
\quad 
X^i=\frac1\theta\left(\left(p\delta^{ij}-(\sigma^{ij}+\tau \delta^{ij)}\right)u_j+q^i\right),
\ee
and the production term
\be
(I^{b})_{b=0,...,13}=(0_1,0_3,0_1,-\frac1{\theta\eta_Q} q,\frac1{\theta\eta_S}\sigma,\frac1{\theta\eta_B}\tau),
\ee
system \eqref{claws}, \eqref{Fis} coincides with the classical Navier-Stokes-Fourier (NSF) 
equations. In \eqref{GPnonrelEuler} and \eqref{GPnonrelNSF} the fluid can be identified through
an equation of state   
\be\label{eos}
p=\hat p(\theta,\psi).  
\ee
(b) With the modified 4-potential 
\be\label{GPNew}
\begin{aligned}
X_N^0=\frac{p_N}\theta,
\quad 
X_N^i=\frac1\theta\left(\left(p_N\delta^{ij}-(\sigma^{ij}+\tau \delta^{ij)}\right)u_j+q^i\right),
\qquad\quad\\
p_N=\hat p(\theta,\psi-\mathcal S),\quad
\mathcal S=\mathcal  S(\tilde\sigma,\tilde\tau,\tilde q)
=\frac12\left(\eps_S\Vert\tilde\sigma\Vert^2+\eps_B\tilde\tau^2+\eps_Q|\tilde q|^2\right),
\end{aligned}
\ee
and the unmodified production term $I^{\alpha b}_N=I^{\alpha b}$,
the resulting system is, as he called it, ``a symmetric-hyperbolic system of conservative equations 
for a viscous heat conducting fluid''. The latter is indeed the model discussed above!

A 4-potential comes with a protopotential whenever the rank-2 tensor
$$
\left(F^{\alpha\beta}\right)_{\alpha,\beta=0,...,3}
$$  
is symmetric. This has been used by Geroch and Lindblom in the relativistic context with 
$F^{\alpha\beta}=T^{\alpha\beta}$ the Lorentz invariant energy-momentum 4-tensor \cite{GL}; 
it also applies to non-relativistic fluid dynamics with $F^{\alpha\beta}=M^{\alpha\beta}$ 
the Galilei invariant mass-momentum 4-tensor.
We note that the classical Euler equations, Navier-Stokes-Fourier equations, and Ruggeri 
model have 
$$
\begin{aligned}
X_E(\tilde\psi,\tilde u,\tilde\theta)&=\hat X(\theta,\psi)\\
X(\tilde\psi,\tilde u,\tilde\theta,\tilde\sigma,\tilde\tau,\tilde q)
&=\hat X(\theta,\psi)
+
\theta\left(-(1/2)(\tilde\sigma_{ij}+\tilde\tau\delta_{ij})\tilde u^i\tilde u^j
+\tilde q_i\tilde u^i\right)\\
X_N(\tilde\psi,\tilde u,\tilde\theta,\tilde\sigma,\tilde\tau,\tilde q)&=
\hat X(\theta,\psi-\mathcal S(\tilde\sigma,\tilde\tau,\tilde q))
+
\theta\left(-(1/2)(\tilde\sigma_{ij}+\tilde\tau\delta_{ij})\tilde u^i\tilde u^j
+\tilde q_i\tilde u^i\right)\
\end{aligned}
$$
as protopotentials, respectively, where 
$\hat X$ relates to \eqref{eos} as 
$$
\hat X(\theta,\psi)=\int\hat p(\theta,\psi)d\psi.
$$
From the original perspective of the present paper we switch to a new one, now proposing a

\textit{Lorentz invariant version of Ruggeri's model.}  
The relativistic Euler equations,
$$
\begin{aligned}
 \partial_\alpha T^{\alpha\beta}_E&=0,\\
 \partial_\alpha N^\alpha_E&=0
\end{aligned}
$$
admit the Godunov variables \cite{RS} 
\be\label{GVrelEuler}
\psi=\frac g\theta,\quad\Upsilon^\alpha=\frac{u^\alpha}\theta
\ee
and the protopotential \cite{GL} 
\be\label{XrelEuler}
\bar X_E(\Upsilon,\psi)=\bar{\hat X}(\theta,\psi)
\ee
with now 
$$
\bar{\hat X}(\theta,\psi)=\int\hat p(\theta,\psi)d\theta;
$$
indeed, with $\theta^{-2}=-\Upsilon_\alpha\Upsilon^\alpha$,
\be\label{idealgas}
\begin{aligned}
T^{\alpha\beta}_E&=
\frac{\partial^2\bar X_E(\Upsilon,\psi)}{\partial \Upsilon_\alpha\partial\Upsilon_\beta}=
\theta^3\frac{\partial p(\theta,\psi)}{\partial \theta}\Upsilon^\alpha\Upsilon^\beta
+p(\theta,\psi)g^{\alpha\beta},\\
N^\alpha_E&=
\frac{\partial^2\bar X_E(\Upsilon,\psi)}{\partial \Upsilon_\alpha\partial\psi}
=\frac{\partial p(\theta,\psi)}{\partial \psi}\Upsilon^\alpha.
\end{aligned}
\ee
To prepare for including dissipative effects, we decompose symmetric rank-2 tensors via
$$
\begin{aligned}
\SP:V_{\alpha\beta}\mapsto 
\SP_{\alpha\beta}^{\gamma\delta}V_{\gamma\delta}&\quad\text{with }
\SP_{\alpha\beta}^{\gamma\delta}=
\Pi_{(\alpha}^\gamma\Pi_{\beta)}^\delta-\frac13\Pi_{\alpha\beta}\Pi^{\gamma\delta},
\\
\SP:V_{\alpha\beta}\mapsto\BP_{\alpha\beta}^{\gamma\delta}V_{\gamma\delta}&\quad\text{with }
\BP_{\alpha\beta}^{\gamma\delta}=
\frac13\Pi_{\alpha\beta}\Pi^{\gamma\delta},
\\
\QP:V_{\alpha\beta}\mapsto\QP_{\alpha\beta}^{\gamma\delta}V_{\gamma\delta}&\quad\text{with }
\QP_{\alpha\beta}^{\gamma\delta}=
-\left(\Pi_\alpha^{(\gamma} u_\beta+\Pi_\beta^{(\gamma} u_\alpha\right) u^{\delta)},
\\
\HP:V_{\alpha\beta}\mapsto\HP_{\alpha\beta}^{\gamma\delta}V_{\gamma\delta}&\quad\text{with }
\HP_{\alpha\beta}^{\gamma\delta}=
u_\alpha u_\beta u^\gamma u^\delta;
\end{aligned}
$$  
the mappings $\SP,\BP,\QP,\HP$ are complementing projectors, i.e., 
$$
{\CP}\DP=\delta^{CD}\ \CP,\quad C,D\in\{S,B,Q,H\},
\quad \text{and}\quad 
\SP+\BP+\QP+\HP=\text{id}.
$$
\goodbreak
Now, slightly varying upon \cite{GL}, we augment, as a first step, the list \eqref{GVrelEuler}
as 
\be\label{GVrelExtended}
\psi,\quad\Upsilon^\alpha,\quad\Sigma^{\alpha\beta}  
\ee
with a single symmetric rank-2 tensor $\Sigma^{\alpha\beta}$, and find that 
\be\label{XEckart}
\bar X(\Upsilon,\psi,\Sigma)=\bar{\hat X}(\theta,\psi)-\frac12\Sigma_{\alpha\beta}\Upsilon^\alpha\Upsilon^\beta
\ee
as a protopotential and the production term
$
(\bar I^b)_b=(0_5,(I^{\alpha\beta})_{\alpha\beta})
$
with
$$
I_{\alpha\beta}=
-\frac1\theta
\left(
 \frac1{\eta_S} \SP_{\alpha\beta}^{\gamma\delta}
-\frac1{\eta_B} \BP_{\alpha\beta}^{\gamma\delta}
-\frac1{\eta_Q} \QP_{\alpha\beta}^{\gamma\delta}
-\frac1{\eta_H} \HP_{\alpha\beta}^{\gamma\delta}
\right)
\Sigma_{\gamma\delta}
$$
together induce Eckart's counterpart \cite{E}
of the non-relativistic NSF equations, here in the form
\be\label{relNSF}
\begin{aligned}
\partial_\alpha \left(T^{\alpha\beta}_E-\Sigma^{\alpha\beta}\right)&=0\\
\partial_\alpha N^\alpha_E&=0\\
\partial_\alpha 
\left(\frac12\left(g^{\alpha\beta}\Upsilon^\gamma+g^{\alpha\gamma}\Upsilon^\beta\right) \right) 
&=I^{\beta\gamma}.
\\
\end{aligned}
\ee
The last line of \eqref{relNSF} is to be read as 
\be\label{decompSigma}
\Sigma_{\alpha\beta}=
-\theta\left(
 {\eta_S} \SP_{\alpha\beta}^{\gamma\delta}
+{\eta_B} \BP_{\alpha\beta}^{\gamma\delta}
+{\eta_Q} \QP_{\alpha\beta}^{\gamma\delta}
+{\eta_H} \HP_{\alpha\beta}^{\gamma\delta}
\right)
\left(\Upsilon_{\gamma,\delta}+\Upsilon_{\delta,\gamma}\right)/2,
\ee
which essentially\footnote{with the exception of the last term, 
which could be there  mathematically (if $\eta_H\neq 0$), but 
to the author's knowledge has not been reported in the physics literature} expresses the classically known 
composition of dissipative stresses (cf., e.g., \cite{W}, p.\ 56),
with $\eta_S,\eta_B,\eta_Q$ coefficients of shear viscosity, bulk viscosity, and heat conduction.
Representation \eqref{decompSigma} and equations \eqref{relNSF} remain meaningful if one 
or several of the dissipation coefficients $\eta_S,\eta_B,\eta_Q,\eta_H$ vanish 
identically;\footnote{While it seems generally reasonable to assume that the `heating dissipation' 
coefficient $\eta_H$ vanishes, many fluids also have $\eta_B=0$, no bulk viscosity.} 
mathematically such situations imply a reduction of the state space to a corresponding equilibrium 
manifold. \\
We finally vary the protopotential $\bar X$ as
\be\label{relRuggeri}
\bar X_N(\Upsilon,\psi,\Sigma)
=
\bar{\hat X}(\theta,\psi-\bar{\mathcal S}(\Sigma,\Upsilon))
-\frac12\Sigma_{\alpha\beta}\Upsilon^\alpha\Upsilon^\beta
\ee
with
$$
\bar{\mathcal S}(\Sigma,\Upsilon)=
\frac12\left(
 \eps_S \SP_{\alpha\beta}^{\gamma\delta}{\Sigma}^{\alpha\beta}{\Sigma}_{\gamma\delta}   
+\eps_B \BP_{\alpha\beta}^{\gamma\delta}{\Sigma}^{\alpha\beta}{\Sigma}_{\gamma\delta} 
+\eps_Q \QP_{\alpha\beta}^{\gamma\delta}{\Sigma}^{\alpha\beta}{\Sigma}_{\gamma\delta}   
+\eps_H \HP_{\alpha\beta}^{\gamma\delta}{\Sigma}^{\alpha\beta}{\Sigma}_{\gamma\delta}
\right).
$$
It appears that the resulting symmetric system
\be
\begin{aligned}
\partial_\alpha\left(\frac{\partial \bar X_N(\Upsilon,\psi,\Sigma)}{\partial \Upsilon_\alpha\partial \Upsilon_\beta}\right)&=0,\\
\partial_\alpha\left(\frac{\partial \bar X_N(\Upsilon,\psi,\Sigma)}{\partial \Upsilon_\alpha\partial \psi}\right)&=0,\\
\partial_\alpha\left(\frac{\partial \bar X_N(\Upsilon,\psi,\Sigma)}{\partial \Upsilon_\alpha\partial \Sigma_{\beta\gamma}}\right)&=0
\end{aligned}
\ee
is well-behaved.
\goodbreak
{\small

}
\end{document}